\documentclass[12pt]{amsart}
\usepackage{amsmath, amsthm, amscd, amsfonts, amssymb, graphicx, color}
\usepackage{hyperref}
\usepackage{float}
\usepackage[capitalize,nameinlink]{cleveref}
\newtheorem{thm}{Theorem}
\newtheorem{rmk}{Remark}
\newtheorem{theorem}{Theorem}[section]

\newtheorem{corollary}[theorem]{Corollary}
\newtheorem{definition}[theorem]{Definition}
\theoremstyle{definition}

\newtheorem{remark}[theorem]{Remark}
\numberwithin{equation}{section}
\showoutput
\begin{document}
\setcounter{page}{1}

\title[Ricci solitons as critical points of quadratic curvature functionals]{Ricci solitons as critical points of quadratic curvature functionals}

\author[Atreyee Bhattacharya and Sayoojya Prakash]{Atreyee Bhattacharya and Sayoojya Prakash}
\address{Department of Mathematics, Indian Institute of Science Education and Research Bhopal}
\email{atreyee@math.iiserb.ac.in}
\address{Department of Mathematics, Indian Institute of Science Education and Research Bhopal}
\email{sayoojya21@iiserb.ac.in}
\keywords{Riemannian functionals, critical points, Ricci solitons, rigidity}
\subjclass[2020]{53C24, 53C15, 53C21, 53C25}
\begin{abstract}
Rigidity, stability and local minimizing properties of Einstein metrics as critical points of quadratic Riemannian functionals defined by $L^2$-norms of Ricci curvature, scalar curvature, Weyl curvature and Riemannian curvature have been extensively studied. However, there are non-Einstein critical points of these functionals that are not so well understood. In this paper, we study Ricci solitons, a generalization of Einstein metrics, that are critical points of a special quadratic curvature functional and analyze their rigidity.      
\end{abstract}
\maketitle
\section{Introduction}
A Riemannian manifold is said to be Einstein if its Ricci tensor is a constant multiple of the Riemannian metric tensor. A natural generalization of an Einstein manifold is a \textit{Ricci soliton}: A Riemannian manifold $(M,g)$ whose Ricci tensor satisfies the following identity: 
$$Ric_{g} + \frac{1}{2}\mathcal{L}_{X}g = \lambda g,$$ where $X \in \chi(M)$ and $\mathcal{L}_{X}g$ is the Lie derivative of $g$ with respect to $g$. Moreover, if $X$ is a gradient vector field, $(M,g)$ is said to be a \textit{gradient Ricci soliton.}  A Ricci soliton is said to be \textit{rigid} if it reduces to an Einstein manifold. The rigidity of Ricci solitons has been a topic of interest in both Riemannian geometry and theoretical Physics for the past three decades.

R. Hamilton (\cite{RHS}) proved that closed two-dimensional Ricci solitons are rigid. Following this, T. Ivey (\cite{RSTV}) established the rigidity of all three-dimensional closed Ricci solitons. Furthermore, he showed that in all dimensions, closed Ricci solitons with $\lambda \leq 0$ are rigid. G. Perelman (\cite{PRF}) proved that all closed Ricci solitons are in fact gradient Ricci solitons. P. Petersen and W. Wylie obtained multiple rigidity results for Ricci solitons in a series of papers. They first showed that a closed \textit{gradient} Ricci soliton is rigid if $\int_{M}Ric_{g}(\nabla f, \nabla f)dv_{g} \leq 0$ (\cite{RGRS}). Later, they proved (\cite{GRSPW}) that all homogeneous gradient Ricci solitons are rigid. Moreover, they showed (\cite{CGSPW}) that the only simply connected \textit{shrinking} gradient solitons with vanishing Weyl tensor are $\mathbb{S}^{n}$, $\mathbb{S}^{n-1} \times \mathbb{R}$, and $\mathbb{R}^{n}$. A. Naber (\cite{RSAN}) proved that four-dimensional non-compact Ricci solitons with a bounded non-negative curvature operator are isometric to $\mathbb{R}^{4}$ or finite quotients of $\mathbb{S}^{2} \times \mathbb{R}^{2}$, and $\mathbb{S}^{3} \times \mathbb{R}$. However, the rigidity problem for Ricci solitons with $\lambda >0$ is yet to be resolved. Recently, the authors established rigidity of certain families of closed Ricci solitons with $\lambda >0$, using conformal submersion (\cite{ABCS}) and Ricci flow (\cite{ABRF}) techniques.

In this paper, we study Ricci solitons that are critical points of a special quadratic curvature functional and analyze their rigidity. The critical points of quadratic curvature functionals and their stability form an active area of research in Riemannian geometry and geometric analysis. Let $M$ be a closed smooth manifold of dimension at least $4$. Consider the quadratic curvature functional $\mathcal{F}_{t}$, defined on the space of Riemannian metrics on $M$ by
\begin{equation*}
    \mathcal{F}_{t}(g) = \int_{M}|Ric_{g}|^{2}dv_{g} + t\int_{M}s_{g}^{2}dv_{g}
\end{equation*}
 where $Ric_{g}$ and $s_{g}$ are the Ricci curvature and the scalar curvature of $(M,g)$, respectively. We would like to understand the rigidity of Ricci solitons that are critical points of $\mathcal{F}_{t}.$
 
 It is well known that Einstein metrics are critical points of $\mathcal{F}_{t}$ restricted to the space $\mathcal{M}_1$ of Riemannian metrics on $M$ of unit volume. In \cite{GVCM}, M. Gursky and J. Viaclovsky investigated the stability of Einstein metrics as critical points of $\mathcal{F}_{t}|_{\mathcal{M}_{1}}$. The Riemannian products of Einstein manifolds $(M_{0},g_{0})$ and $(M_{1},g_{1})$ with Einstein constants $\lambda$ and $-\lambda$ are also critical points of $\mathcal{F}_{t}|_{\mathcal{M}_{1}}$. In \cite{ASSQ}, A. Bhattacharya and S. Maity explored their stability. Notably, there exist many more non-Einstein critical points of $\mathcal{F}_{t}$. M. Brozos-Vázquez, S. Caeiro-Oliveira, and E.García Río demonstrated the existence of non-Einstein cones, which are critical for all quadratic curvature functionals (\cite{MCMQ}). G. Calvaruso and A. Zaein proved that there are four-dimensional compact manifolds admitting non-Einstein Lorentzian metrics, which serve as critical points for all quadratic curvature functionals (\cite{GCSC}).
 
 In view of this, it is important to study conditions under which a critical point of $\mathcal{F}_{t}|_{\mathcal{M}_{1}}$ reduces to an Einstein metric. Significant work has already been done in this direction. G. Catino (\cite{GCNSE}) proved that every critical metric of $\mathcal{F}_{t}|_{\mathcal{M}_{1}}$ with non-negative sectional curvature must be Einstein. G. Huang, Y. Chen and X. Li showed that a critical point of $\mathcal{F}_{t}|_{\mathcal{M}_{1}}$ satisfying specific inequalities involving the norms of various curvatures reduces to an Einstein metric (\cite{GCLRP}). M. Bernardini (\cite{MBRCQ}) proved that all critical points of $\mathcal{F}_{t}|_{\mathcal{M}_{1}}$ satisfying certain sectional curvature pinching conditions also reduce to an Einstein metric.
 
 Since Ricci solitons generalize Einstein manifolds, it is important to investigate if there are non-trivial Ricci solitons that are critical points of $\mathcal{F}_{t}|_{\mathcal{M}_{1}}$. To this end, we prove the following results that also include quasi Einstein manifolds (see Definition \ref{defn:Einstein}) that are critical points of $\mathcal{F}_{t}|_{\mathcal{M}_{1}}$.
 \begin{thm}\label{TFMT}
        \begin{enumerate}
         \item Let $(M^{4},g,f,m)$ be a closed quasi-Einstein manifold. If $tr(\nabla(\mathcal{F}_{t}|_{\mathcal{M}_{1}})(g)) =0$ i.e., the point-wise trace of the gradient vector $\nabla(\mathcal{F}_{t}|_{\mathcal{M}_{1}})(g)$ is zero at all points of $M$ and $t \neq -\frac{1}{3}$, then $g$ is rigid.
     \item   Let $(M^{n},g,f)$ be a closed Ricci soliton of dimension $n\geq 5$ satisfying $tr(\nabla(\mathcal{F}_{t}|_{\mathcal{M}_{1}})(g)) =0$ as above. If $t \in (-\infty, -\frac{3n -4}{8(n-1)}] \cup [0,\infty)$, then $g$ is rigid.
 \end{enumerate}
   \end{thm}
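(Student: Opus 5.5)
The plan is to compute the pointwise trace of the Euler–Lagrange tensor of $\mathcal{F}_t|_{\mathcal{M}_1}$ explicitly. The hypothesis $tr(\nabla(\mathcal{F}_t|_{\mathcal{M}_1})(g))=0$ then becomes a scalar PDE for $s_g$, and we combine it with the soliton (or quasi-Einstein) structure equations. The trace formula I expect, up to the paper's normalization of the gradient, is
\begin{equation*}
tr\,\nabla(\mathcal{F}_t|_{\mathcal{M}_1})(g) = -\tfrac{1}{2}\big(n+4(n-1)t\big)\Delta s + \tfrac{n-4}{2}\Big(|Ric|^2 + t s^2 - \tfrac{1}{\mathrm{Vol}(M)}\mathcal{F}_t(g)\Big).
\end{equation*}
It comes from the standard first-variation formulas for $\int|Ric|^2$ and $\int s^2$, which contribute $\nabla^2 s$, $\Delta Ric$ and $\Delta s\, g$ terms plus zeroth-order curvature terms. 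The last term is the Lagrange multiplier coming from the unit-volume constraint. The first step is to derive this carefully, since all constants in the theorem hinge on it.

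For part (1), $n=4$ kills the zeroth-order part. The hypothesis then reduces to $(1+3t)\Delta s=0$, so for $t\neq -\frac13$ the function $s$ is harmonic, hence constant on the closed manifold $M$. A closed quasi-Einstein manifold with constant scalar curvature is trivial: its potential $f$ is constant and $g$ is Einstein. This can be seen from the quasi-Einstein identity relating $s$, $\Delta_f f$ and $|\nabla f|^2$ (Case–Shu–Wei type), applied at the extrema of $f$ or integrated against $e^{-f/m}$. That gives rigidity.

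For part (2), write $a=n+4(n-1)t$ and $C=\mathcal{F}_t(g)/\mathrm{Vol}$. The hypothesis reads $a\Delta s=(n-4)(|Ric|^2+ts^2-C)$. By Perelman we may take the soliton to be gradient, and by Ivey we may assume $\lambda>0$. We then use the soliton identities
\begin{equation*}
\Delta s-\langle\nabla f,\nabla s\rangle = 2\lambda s-2|Ric|^2,\qquad \int_M s\,dv_g = n\lambda \mathrm{Vol}(M),
\end{equation*}
together with $s+|\nabla f|^2-2\lambda f=\mathrm{const}$, to eliminate $|Ric|^2$. This yields a drift-elliptic equation of the form $\Delta s - \beta\langle\nabla f,\nabla s\rangle = Q(s)$, where $Q$ is quadratic in $s$ with coefficients depending on $t,n,\lambda,C$. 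We then apply the maximum principle at the max and min of $s$, together with $|Ric|^2\ge s^2/n$ and integration by parts against $s$ or against $e^{-f}$. The aim is to show that $\int |Ric-\frac{s}{n}g|^2=0$, or that $s$ is constant; either forces $\nabla^2 f=\frac{1}{n}\Delta f\, g$, and on a closed soliton this gives rigidity.

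The main obstacle is the sign bookkeeping in this last step: the argument works only when the coefficients align. For $t\ge0$ we have $a>0$ and $t s^2\ge0$. For $t\le-\frac{3n-4}{8(n-1)}$ we have $a\le\frac{4-n}{2}<0$, and the combination $\frac{1}{n}+t$ controlling $|Ric|^2+ts^2$ acquires a definite sign after the soliton substitution. I would first try multiplying the equation by $s-n\lambda$ and integrating, using $\int(s-n\lambda)=0$ and $\int|\nabla s|^2\ge0$. The exact threshold $\frac{3n-4}{8(n-1)}$ should appear as the point where the resulting quadratic form in $\big(|\mathring{Ric}|^2,(s-n\lambda)^2\big)$ becomes semidefinite.
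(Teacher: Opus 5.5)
Part (1) of your proposal is essentially the paper's argument. For $n=4$ the trace reduces to $(1+3t)\Delta s_g=0$, so $s_g$ is constant, and a closed quasi-Einstein metric of constant scalar curvature is trivial. Your general trace formula also agrees with the paper's. Part (2), however, stops at a strategy, and the mechanism you anticipate is not the one that closes the argument. That is a genuine gap.

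\textbf{The negative range.} Carry out your own substitution $|Ric_g|^2=\lambda s-\frac12(\Delta s-\langle\nabla f,\nabla s\rangle)$. You get $b\,\Delta s-\frac{n-4}{2}\langle\nabla f,\nabla s\rangle=(n-4)(ts^2+\lambda s-C)$ with $b=\frac{3n-4}{2}+4(n-1)t$. Test against $u=s-n\lambda$; this is equivalent to testing against $s$, as the paper does, because the equation integrates to zero. For unit volume this gives
\[
b\,\|\nabla s\|^2+(n-4)\Big[\big(t+\tfrac14\big)\int_M u^3\,dv_g+(1+2nt)\lambda\int_M u^2\,dv_g\Big]=0 .
\]
The traceless Ricci term has disappeared and the identity is cubic, so there is no quadratic form to make semidefinite. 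The cubic term has no sign as it stands. The threshold $-\frac{3n-4}{8(n-1)}$ is simply where $b$ changes sign.

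The input you never use is $s\ge0$ on a closed shrinker. At a minimum of $s$ one has $0\le\Delta s-\langle\nabla f,\nabla s\rangle\le 2s(\lambda-\frac sn)$, which forces $s_{\min}\ge0$. The paper exploits this through Jensen's inequalities $\int s^3\ge(n\lambda)^3$ and $\int s^2\ge(n\lambda)^2$. It splits into the cases $t\le-\frac{3n-4}{4n}$ and $-\frac{3n-4}{4n}\le t\le-\frac{3n-4}{8(n-1)}$. In your variables the argument is shorter. Use $u\ge-n\lambda$ and $t\le-\frac{3n-4}{8(n-1)}<-\frac14$. Then $(t+\frac14)\int u^3\le-(t+\frac14)n\lambda\int u^2$. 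So the bracket is at most $(1-\frac n4+nt)\lambda\int u^2$, which is negative unless $u\equiv0$ (since $n\ge5$ and $t<0$). This covers the whole negative range at once.

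\textbf{The range $t\ge0$.} The paper works only at the minimum point $p$ of $s$; bars below denote averages. The trace equation gives $|Ric_g|^2(p)\ge\overline{|Ric_g|^2}=n\lambda^2+\overline{|\nabla^2 f|^2}$. The soliton identity gives $|Ric_g|^2(p)\le\lambda s_{\min}\le n\lambda^2$. Together these force $\nabla^2 f=0$. Your drift equation with the maximum principle at both extrema also works here. Since $b>0$, you get $\phi(s_{\min})\ge C\ge\phi(s_{\max})$ for $\phi(x)=tx^2+\lambda x$. This again relies on $s\ge0$, which makes $\phi$ strictly increasing on the range of $s$.
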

   The following is an immediate consequence of \cref{TFMT}.
\begin{corollary}
 \begin{enumerate}
  \item Let $(M^{4},g,f,m)$ be a closed quasi-Einstein manifold such that $tr(\nabla(\mathcal{F}_{t}|_{\mathcal{M}_{1}})(g))=0$. If $t \neq -\frac{1}{3}$, then $g$ is a critical point of $\mathcal{F}_{t}|_{\mathcal{M}_{1}}$.
\item   Let $(M^{n},g,f)$ be a closed Ricci soliton of dimension $n\geq 5$ with $tr(\nabla( \mathcal{F}_{t}|_{\mathcal{M}_{1}})(g))=0$. If $t \in (-\infty, -\frac{3n -4}{8(n-1)}] \cup [0,\infty)$, then $g$ is a critical point of $\mathcal{F}_{t}(g)|_{\mathcal{M}_{1}}$.
\end{enumerate}
\end{corollary}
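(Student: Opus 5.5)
The corollary follows from \cref{TFMT} once we know that Einstein metrics are critical points of $\mathcal{F}_{t}|_{\mathcal{M}_{1}}$. For each part I would first apply the corresponding part of \cref{TFMT} and conclude that $g$ is rigid, hence Einstein. For part (1), $(M^4,g,f,m)$ is closed quasi-Einstein with vanishing trace of the gradient and $t\neq -\frac13$. For part (2), $(M^n,g,f)$ is a closed Ricci soliton with $n\ge 5$ and $t$ in the stated range.

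It then remains to show directly that any Einstein metric on a closed manifold is critical for $\mathcal{F}_{t}|_{\mathcal{M}_{1}}$, so that the whole gradient vanishes and not merely its trace. I would use the standard Euler--Lagrange tensor of $\mathcal{F}_t$ (as in Catino and in Gursky--Viaclovsky):
\begin{equation*}
\nabla\mathcal{F}_t(g) = -\Delta Ric + \Big(\tfrac12+2t\Big)\nabla^2 s - \Big(\tfrac12+2t\Big)\Big(\Delta s\Big) g - 2 R(Ric)+\dots ,
\end{equation*}
where the omitted terms are quadratic in the Ricci tensor and scalar curvature, namely $-\frac12|Ric|^2 g$, $2t\,s\,Ric$ and $-\frac t2 s^2 g$. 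The constrained gradient is then the traceless part of $\nabla\mathcal{F}_t(g)$, up to the constant Lagrange multiplier coming from the volume constraint, with that multiplier determined by integrating the trace.

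Now substitute $Ric=\lambda g$ with $s=n\lambda$ constant. All derivative terms vanish, and $R(Ric)_{ij}=R_{ikjl}R^{kl}=\lambda\, Ric_{ij}=\lambda^2 g_{ij}$. So every remaining term is a constant multiple of $g$, and the traceless part is zero. Therefore $g$ is critical for $\mathcal{F}_{t}|_{\mathcal{M}_{1}}$. This is the well-known fact recalled in the introduction.

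The only real point to check is that the notion of critical point on $\mathcal{M}_1$ matches the gradient in \cref{TFMT}. That gradient is the $L^2$-projection of $\nabla\mathcal{F}_t$ onto $T\mathcal{M}_1$. For the tensor $c\,g$ with $c$ constant, this projection is $c\,g$ minus its mean-value multiple of $g$, which is zero. Once this identification is made, no step is genuinely hard: all the difficulty lives in \cref{TFMT}, and the corollary is a direct consequence of it.
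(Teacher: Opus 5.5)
Your proposal is correct and follows the paper's route exactly: \cref{TFMT} forces $g$ to be Einstein, and Einstein metrics are critical for $\mathcal{F}_{t}|_{\mathcal{M}_{1}}$. Indeed, at $Ric_{g}=\lambda g$ every term of \eqref{E1} is a constant multiple of $g$, and the volume-normalizing terms $(\frac{2}{n}-\frac{1}{2})\|\cdot\|^{2}g$ cancel them, so the constrained gradient vanishes identically.

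Your displayed Euler--Lagrange tensor has some coefficients and signs off relative to \eqref{E1}. The Hessian coefficient should be $1+2t$, and the quadratic terms are $+\frac{1}{2}|Ric_{g}|^{2}g$, $-2t\,s_{g}Ric_{g}$ and $+\frac{t}{2}s_{g}^{2}g$. Also, the constrained gradient is the $L^{2}$-projection, not the pointwise traceless part. Neither issue affects the argument, since you only use that each term is a constant multiple of $g$.
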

\begin{rmk}
It is worth mentioning that it is not always the case that if the point-wise trace of $\nabla (\mathcal{F}_{t}|_{\mathcal{M}_{1}})(g) = 0$ vanishes at all points, then $g$ must be a critical point of $\mathcal{F}_{t}|_{\mathcal{M}_{1}}$. For instance, consider closed Einstein manifolds $(M_{1},g_{1})$ and $(M_{2},g_{2})$ with Einstein constants $\lambda_{1}$ and $\lambda_{2}$, respectively, where $|\lambda_{1}| \neq |\lambda_{2}|$. Considering the Riemannian product $(M,g) = (M_{1}\times M_{2}, g_{1} + g_{2})$, it can be checked that $tr(\nabla (\mathcal{F}_{t}|_{\mathcal{M}_{1}})(g)) = 0$ at all points, but  $g$ is not a critical point of $\mathcal{F}_{t}|_{\mathcal{M}_{1}}$.
\end{rmk}
In particular, from \cref{TFMT}, we conclude the following.
\begin{thm}\label{MT}
\begin{enumerate}
    \item A closed Ricci soliton $(M^{n},g,f)$, which is a critical point of $\mathcal{F}_{t}|_{\mathcal{M}_{1}}$ is rigid, if $n = 4$ or $t \in (-\infty, -\frac{3n -4}{8(n-1)}] \cup [0,\infty)$.
    \item A closed quasi-Einstein manifold $(M^{4},g,f,m)$ that is a critical point of $\mathcal{F}_{t}|_{\mathcal{M}_{1}}$ is rigid, if $t \neq - \frac{1}{3}$.
\end{enumerate}
\end{thm}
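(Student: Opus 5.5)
The plan is to read off \cref{MT} from \cref{TFMT}. The one extra observation is that criticality is a stronger hypothesis than the vanishing of the pointwise trace. If $g$ is a critical point of $\mathcal{F}_{t}|_{\mathcal{M}_{1}}$, then $\nabla(\mathcal{F}_{t}|_{\mathcal{M}_{1}})(g)=0$ as a symmetric $2$-tensor, so in particular $tr(\nabla(\mathcal{F}_{t}|_{\mathcal{M}_{1}})(g))=0$ at every point of $M$. Hence every hypothesis of \cref{TFMT} other than the ones on $n$ and $t$ holds automatically.

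Part (2) of \cref{MT} then follows directly from part (1) of \cref{TFMT}: for a closed quasi-Einstein $(M^{4},g,f,m)$ with $t\neq-\frac13$ we get rigidity. For part (1) of \cref{MT} I first use Perelman's theorem, quoted in the introduction, that every closed Ricci soliton is a gradient soliton. If $n\geq 5$ and $t\in(-\infty,-\frac{3n-4}{8(n-1)}]\cup[0,\infty)$, part (2) of \cref{TFMT} applies verbatim. If $n=4$ and $t\neq-\frac13$, I regard the gradient soliton as a quasi-Einstein manifold. In the convention of Definition~\ref{defn:Einstein} this should be the limiting case $m=\infty$, i.e.\ $Ric+\nabla^2 f-\frac1m df\otimes df=\lambda g$ with the last term absent. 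If the definition excludes $m=\infty$, I would instead rerun the four-dimensional argument of \cref{TFMT}(1) with $df\otimes df$ dropped, since that term only adds lower-order contributions that vanish. Either way part (1) of \cref{TFMT} gives rigidity.

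The main obstacle is the remaining case $n=4$, $t=-\frac13$, which \cref{TFMT} excludes. The reason is that here $\mathcal{F}_{-1/3}$ is conformally invariant: by Chern--Gauss--Bonnet,
\[
\mathcal{F}_{-1/3}(g)=\tfrac12\int_M|W_g|^2dv_g-\text{const}\cdot\chi(M),
\]
so the trace of its gradient vanishes identically and carries no information. Here I will use the full critical-point equation, which says that $g$ is Bach-flat. The plan is to adapt the Cao--Chen argument for Bach-flat gradient solitons. Introduce the tensor $D_{ijk}$ built from $W_{ijkl}\nabla_l f$ and the Cotton tensor. Use the soliton identities $\nabla s=2Ric(\nabla f)$ and $C_{ijk}+W_{ijkl}\nabla_l f=D_{ijk}$, and the formula $B_{ij}=\frac1{2}(\nabla_k D_{ikj}+\ldots)$, to derive the integral identity
\[
0=\int_M B(\nabla f,\nabla f)\,dv_g=-\tfrac12\int_M|D|^2\,dv_g .
\]
Compactness makes all integrations by parts legitimate with no boundary terms. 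This gives $D\equiv0$.

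With $D=0$, the standard consequence is that on the open set where $\nabla f\neq0$ the level sets are totally umbilic with constant curvature and $g$ is locally a warped product; together with $B=0$ this yields local conformal flatness there. By analyticity of solitons, $W\equiv0$ on all of $M$. A closed locally conformally flat gradient Ricci soliton is Einstein (Eminenti--La Nave--Mantegazza), or alternatively the $n=4$ soliton computation can be redone with $W=0$. This settles $t=-\frac13$, and I expect verifying the $\int B(\nabla f,\nabla f)$ identity carefully to be the hardest step.
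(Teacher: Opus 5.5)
Your proposal is correct and follows the paper's route. Criticality forces the pointwise trace of the gradient to vanish, so Theorem~\ref{TFMT} covers every case except $n=4$, $t=-\frac13$; there, criticality of $\mathcal{F}_{-1/3}|_{\mathcal{M}_1}$ means Bach-flatness, and rigidity follows from the Cao--Chen $D$-tensor argument. The only difference is that the paper cites Gursky--Viaclovsky for Bach-flatness and the Bach-flat soliton rigidity theorem for the last step, whereas you derive the former from Chern--Gauss--Bonnet and sketch a re-proof of the latter. That sketch is sound on a closed manifold even without the weight $e^{-f}$.
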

\begin{rmk} The bound on $`t'$ in Theorems \ref{TFMT} and \ref{MT} can be improved in two ways as follows.
\begin{enumerate}
    \item (A dimension independent universal bound). It turns out that for $t \in (-\infty,-\frac{3}{8}] \cup [0,\infty)$, the rigidity results stated in Theorems \ref{TFMT} and \ref{MT} follow. We refer to \cref{DIRC} for further details.
    \item  (An improved bound depending on $g$) Using more detailed information about the scalar curvature of $(M,g)$, the bound on $`t'$ can be improved. However, the improved bound no longer remains universal. In particular, a non-rigid Ricci soliton $(M,g,f)$ is not a critical point of $\mathcal{F}_{t}|_{\mathcal{M}_{1}}$ for $t \in (-\infty, -\frac{3n-4}{8(n-1)}]\cup [k,\infty)$ where $k \geq max\{\frac{\epsilon - n}{4(n-1)},-\frac{\epsilon C}{(n-4)(M^{2} - m^{2})}\}$ with $0 < \epsilon < n$, where $\Delta s_{g}(p) = C$ where $p$ is the point where $s_{g}$ attains global minimum, $s_{min} = m$ and $s_{max} = M$.
\end{enumerate}
\end{rmk}
As an immediate consequence, we observe the following.
\begin{rmk}
    A non-trivial Ricci soliton $(M,g,f)$ with $\Delta s_{g}(p) = C$ where $p$ is the point where $s_{g}$ attains global minimum, $s_{min} = m$, $s_{max} = M$ is not a critical point of $\mathcal{F}_{t}|_{\mathcal{M}_{1}}$ for $t \in (-\infty, -\frac{3n-4}{8(n-1)}]\cup [\frac{\epsilon - n}{4(n-1)},\infty) $ for $0< \epsilon < n$ if $\frac{4(n-1)C + (n-4)(M^{2} - m^{2})}{M^{2} - m^{2}} \geq \frac{n(n-4)}{\epsilon}$.
\end{rmk}
\section{Preliminaries}
In this section, we recall some of the basic Riemannian geometric concepts and introduce the notation used throughout the paper. Let $(M^n,g)$ denote a closed Riemannian manifold of dimension $n$. Let $|\cdot|$ and $\| \cdot\|$ denote the pointwise and the global integral norms of a tensor, respectively. Likewise, $\langle \cdot \rangle$ and $\langle \langle \cdot \rangle \rangle$ denote the pointwise and the global integral inner products of a tensor induced by $g$, respectively.
\begin{definition}
 The \textbf{Riemann curvature tensor} of a Riemannian manifold $(M,g)$ is given by,
        \begin{equation}
            R(X,Y,Z,W) = g(\nabla_{Y}\nabla_{X}Z - \nabla_{X}\nabla_{Y}Z + \nabla_{[X,Y]}Z,W)
        \end{equation}
        where $X,Y,Z$, and $W$ are arbitrary vector fields in an open set of $M$ and $\nabla$ is the Levi-Civita connection of $(M,g)$.
\end{definition}
\noindent Let $\mathcal{M}$ denote the space of all Riemannian metrics on $M$. Then,
\begin{equation*}
    T_{g}\mathcal{M} = \mathcal{S}^{2}(M)
\end{equation*}
where $\mathcal{S}^{2}(M)$ is the space of all symmetric $2$ tensor fields on $M$.\\
\noindent Let $\mathcal{M}_{1}$ denote the space of all Riemannian metrics on $M$ with unit volume. Then,
\begin{equation*}
    T_{g}\mathcal{M}_{1}\,=\, \{h \in \mathcal{S}^{2}(M)\,|\,\langle \langle h,g\rangle \rangle = \int_{M}tr(h)dv_{g}=0\}
\end{equation*}
where $tr(h)$ denotes the point-wise trace of $h$ with respect to $g$.
\begin{definition}
    A real-valued function $F : \mathcal{M} \rightarrow \mathbb{R}$ is called a \textbf{Riemannian functional} if $F(\phi^{*}g)= F(g)$ for every diffeomorphism $\phi$ on $M$ and every $g \in \mathcal{M}$.
\end{definition}
\begin{definition}
    A symmetric $2$-tensor field $\nabla F_{g}$ is called the \textbf{gradient of $F$ at $g$}  if
    \begin{equation*}
        \langle \langle \nabla F_{g} ,h\rangle \rangle = \frac{d}{dt}F(g+th)|_{t=0}
    \end{equation*}
    for every $h \in \mathcal{S}^{2}(M)$. A metric $g \in \mathcal{M}_{1}$ is called a \textbf{critical point} of a functional $F$ if $\nabla F_{g}$ vanishes along $T_{g}\mathcal{M}_{1}$.
\end{definition}
We consider the following quadratic curvature functionals in this paper defined by the $L^2$-norms of Ricci  and scalar curvature, as follows
\begin{equation*}
    \mathcal{S}(g) = \int_{M}s_{g}^{2}dv_{g}
\end{equation*}
\begin{equation*}
    \mathcal{R}ic(g) = \int_{M}|Ric_{g}|^{2}dv_{g}
\end{equation*}
\begin{equation*}
    \mathcal{F}_{t}(g) = \mathcal{S}(g) + t \mathcal{R}ic(g)
\end{equation*}
where $s_{g}$ and $Ric_{g}$ denote the scalar curvature and Ricci curvature of $(M,g)$, respectively.

The gradients of the quadratic curvature functionals $\mathcal{S}$ and $\mathcal{R}ic$ restricted to the space $\mathcal{M}_1$ are given below (see \cite{BEM} for details),

\small{\begin{eqnarray}\label{E1}
 \nonumber   \nabla (\mathcal{S}|_{\mathcal{M}_{1}})(g) &=& 2Dds_{g} - 2 (\Delta s_{g})g - 2s_{g}.Ric_{g} + \frac{1}{2}s_{g}^{2}g +(\frac{2}{n} - \frac{1}{2})\|s_{g}\|^{2}g \text{ and}\\
 \nabla (\mathcal{R}ic|_{\mathcal{M}_{1}})(g) &=& D^{*}DRic_{g} + Dds_{g} -  \frac{1}{2} (\Delta s_{g})g \\
 \nonumber &&+ \frac{1}{2}|Ric_{g}|^{2}g -2R_{g}^{\circ}(Ric_{g}) +(\frac{2}{n} - \frac{1}{2})\|Ric_{g}\|^{2}g
\end{eqnarray}}
where $\Delta s_{g} = tr(Dds_{g})$ and $R_{g}^{\circ}(Ric_{g})(X,Y) = \langle R(X,-,Y,-),Ric_{g} \rangle$.
\begin{definition}\label{defn:Einstein}
    A Riemannian manifold $(M,g)$ is said to be
    \begin{enumerate}
        \item an \textbf{Einstein manifold} if $Ric_{g} = \lambda g$, where $\lambda \in \mathbb{R}$.
        \item a \textbf{Ricci soliton} if there exists a vector field $X \in \chi(M)$ such that
        \begin{equation*}
            Ric_{g} + \frac{1}{2}\mathcal{L}_{X}g = \lambda g
        \end{equation*} 
        where $\mathcal{L}_{X}g$ is the Lie derivative of $g$ with respect to $X$ and $\lambda \in \mathbb{R}$. In particular, if $X = \nabla f$, for some $f \in C^{\infty}(M)$, then $(M,g)$ is called a \textbf{gradient Ricci soliton}.
        \item a \textbf{quasi-Einstein manifold} if there exists a function $f \in C^{\infty}(M)$ such that
        \begin{equation*}
            Ric_{g} + Hess_{g}(f) - \frac{1}{m} df \otimes df = \lambda g
        \end{equation*}
        where $0 < m \leq \infty$, $\lambda \in \mathbb{R}$ and $Hess_{g}(f)(X,Y) = g(\nabla_{X}\nabla f,Y)$.
    \end{enumerate}
\end{definition}
\begin{remark} \label{rs}
\begin{enumerate}
    \item It turns out that if $m = \infty$ in the above definition, then a quasi-Einstein manifold becomes a gradient Ricci soliton.
    \item Closed Ricci solitons are gradient Ricci solitons (see \cite{PRF}).
\end{enumerate}
\end{remark}
Thus quasi-Einstein manifolds generalize the Einstein manifolds and we have the following definition.
\begin{definition}
    A quasi-Einstein manifold is said to be \textbf{rigid} if it reduces to an Einstein manifold.
\end{definition}
\begin{remark}
    A closed quasi-Einstein manifold is rigid if and only if $f$ is constant, which is not true in a non-compact setup (\cite{RQE}). Also, using the second part of Remark \ref{rs}, it follows that the rigidity of closed Ricci solitons is a particular case of the rigidity of closed quasi-Einstein manifolds.  
\end{remark}
\section{Main Results}
Note that it suffices to consider closed Ricci solitons of dimension at least $4$ as closed Ricci solitons of dimension $3$ or less are rigid. As an immediate consequence of Proposition 3.1 of \cite{CPSR}, we have the following observation for closed quasi-Einstein manifolds.
\begin{theorem}
    A closed quasi-Einstein manifold which is a critical point of the quadratic scalar curvature functional $\mathcal{S}$ restricted to $\mathcal{M}_{1}$, is rigid.
\end{theorem}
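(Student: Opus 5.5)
The plan is to extract two facts from the critical point equation: first that the scalar curvature is constant, and then, using the quasi-Einstein structure, that $f$ is constant. The second step is where the quasi-Einstein hypothesis enters.

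Since $g$ is a critical point of $\mathcal{S}|_{\mathcal{M}_{1}}$, the gradient in \eqref{E1} vanishes, so
$$2Dds_{g} - 2(\Delta s_{g})g - 2s_{g}Ric_{g} + \tfrac12 s_{g}^{2}g + \big(\tfrac2n-\tfrac12\big)\|s_{g}\|^{2}g = 0 .$$
Taking the pointwise trace gives
$$-2(n-1)\Delta s_{g} + \tfrac{n-4}{2}\big(s_{g}^{2}-\|s_{g}\|^{2}\big)=0 .$$
I would not analyse this equation by hand. Instead I would invoke Proposition 3.1 of \cite{CPSR}, which is the classical statement (cf. also \cite{BEM}) that on a closed manifold a critical metric of $\mathcal{S}|_{\mathcal{M}_{1}}$ has constant scalar curvature, and that either $s_{g}\equiv 0$ or $g$ is Einstein. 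In the Einstein case we are done. So the real content is the scalar-flat case, or more generally any closed quasi-Einstein manifold with constant scalar curvature.

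For that case I would use the quasi-Einstein structure. Tracing $Ric_{g} + Hess_{g}(f) - \frac1m df\otimes df = \lambda g$ gives
$$s_{g} + \Delta f - \tfrac1m|\nabla f|^{2} = n\lambda .$$
Constancy of $s_{g}$ should then force $f$ to be constant; I would quote the known rigidity result of Case--Shu--Wei (\cite{RQE}) that a closed quasi-Einstein manifold of constant scalar curvature is trivial. In the Ricci soliton case ($m=\infty$) I would check this directly, as follows.
\begin{enumerate}
\item Integrating the trace identity $s_{g} + \Delta f = n\lambda$ gives $s_{g}=n\lambda$, hence $\Delta f = 0$.
\item On the closed manifold $M$, $\Delta f=0$ means $f$ is harmonic, so $f$ is constant.
\item Therefore $Ric_{g}=\lambda g$.
\end{enumerate}
For finite $m$ the analogous argument uses $u=e^{-f/m}$. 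The trace identity becomes $\Delta u = \frac{u}{m}(s_{g}-n\lambda)$, so $\Delta u$ is a constant multiple of $u>0$. Integrating, that constant must be $0$, so $u$ is harmonic, hence constant, and again $f$ is constant.

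I expect the main obstacle to be the first step, namely justifying constancy of $s_{g}$ from the traced equation when $n\neq 4$ (for $n=4$ it is immediate, since then $\Delta s_{g}=0$). This is exactly what the cited Proposition 3.1 supplies, so I would rely on it. If a self-contained argument were needed, I would first try a maximum principle: evaluate the traced equation at the maximum and minimum of $s_{g}$, and combine this with the full tensorial equation and the contracted Bianchi identity ($\delta Dds = \ldots$) to show that $\nabla s_{g}=0$.
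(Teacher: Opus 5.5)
Your second half is correct: once $s_g$ is constant, integrating the traced quasi-Einstein equation forces $f$ to be constant. For $m=\infty$ this is direct; for finite $m$ it goes through $\Delta u=\frac{u}{m}(s_g-n\lambda)$ with $u=e^{-f/m}$. The gap is in the first half. The paper's own proof is just the sentence that the theorem follows from Proposition~3.1 of \cite{CPSR}, so at the level of citation you follow it. But you make that citation carry an unconditional statement, namely that every closed critical metric of $\mathcal{S}|_{\mathcal{M}_{1}}$ has constant scalar curvature. You also never use the quasi-Einstein hypothesis at this stage. For $n\neq 4$ this is not, as far as I know, a standard consequence of the Euler--Lagrange equation, and your fallback cannot produce it:
\begin{itemize}
\item The divergence of $\nabla(\mathcal{S}|_{\mathcal{M}_{1}})(g)$ vanishes identically, by diffeomorphism invariance. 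So the contracted Bianchi identity only returns the gradient of the traced equation.
\item The maximum principle applied to $\Delta s_g=\frac{n-4}{4(n-1)}\bigl(s_g^2-\|s_g\|^2\bigr)$ gives $s_{\max}^2\le\|s_g\|^2\le s_{\min}^2$ for $n\ge 5$, and the reversed chain for $n=3$. This forces constancy only if $s_g\ge 0$ (for $n=3$, only if $s_g\le 0$).
\end{itemize}

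The missing idea is that the quasi-Einstein structure has to enter before constancy, through the sign of $s_g$. A nontrivial closed quasi-Einstein manifold has $\lambda>0$ and $s_g\ge 0$. For solitons, evaluate $\frac12(\Delta s_g-\nabla_{\nabla f}s_g)=\lambda s_g-|Ric_g|^2$ at a minimum point of $s_g$ and use $|Ric_g|^2\ge s_g^2/n$. For finite $m\ge 1$, the analogous computation gives the Case--Shu--Wei bound $s_g\ge\frac{n(n-1)}{m+n-1}\lambda$. With $s_g\ge 0$ the argument closes in every dimension $n\ge 3$, with no maximum principle needed. The trace-free part of the critical equation reads $Dds_g-\frac{\Delta s_g}{n}g=s_g\,\mathring{Ric}_g$, where $\mathring{Ric}_g=Ric_g-\frac{s_g}{n}g$. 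Pairing with $\mathring{Ric}_g$ and integrating by parts with $\operatorname{div}\mathring{Ric}_g=\frac{n-2}{2n}\,ds_g$ gives
\[
\int_M s_g\,|\mathring{Ric}_g|^2\,dv_g=-\frac{n-2}{2n}\,\|\nabla s_g\|^2 .
\]
Hence $\nabla s_g\equiv 0$, and your second half finishes the proof.
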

Next, we obtain the following result for closed quasi-Einstein manifolds of dimension $4$.
\begin{theorem}\label{DFRT}
A closed quasi-Einstein manifold $(M^{4},g,f,m)$ satisfying $tr(\nabla(\mathcal{F}_{t}|_{\mathcal{M}_{1}})(g)) =0$ such that $t \neq -\frac{1}{3}$ is rigid.
\end{theorem}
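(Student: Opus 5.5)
The plan is to compute the pointwise trace of $\nabla(\mathcal{F}_{t}|_{\mathcal{M}_{1}})(g)$ explicitly from \eqref{E1}. In dimension $4$ this trace should reduce to a nonzero multiple of $\Delta s_{g}$ exactly when $t\neq -\frac13$. The hypothesis then forces $s_{g}$ to be harmonic, hence constant on the closed manifold $M$, and a closed quasi-Einstein manifold of constant scalar curvature is rigid. Throughout I use the normalization of the introduction, $\mathcal{F}_{t}=\mathcal{R}ic+t\,\mathcal{S}$, since that is the one for which the exceptional value $-\frac13$ appears.

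\textbf{Step 1: trace of the $\mathcal{S}$-gradient.} Take traces in the first line of \eqref{E1}, using $tr(Dds_{g})=\Delta s_{g}$, $tr(g)=n$ and $tr(Ric_{g})=s_{g}$. This gives
\[
tr\,\nabla(\mathcal{S}|_{\mathcal{M}_{1}})(g)= -2(n-1)\Delta s_{g}+\tfrac{n-4}{2}\bigl(s_{g}^{2}-\|s_{g}\|^{2}\bigr).
\]

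\textbf{Step 2: trace of the $\mathcal{R}ic$-gradient.} For the second line I would use three facts. First, $tr(D^{*}DRic_{g})=-\Delta s_{g}$, since the trace commutes with the rough Laplacian and the sign follows the convention $\Delta=tr\,Dd$. Second, $tr(Dds_{g})=\Delta s_{g}$. Third, $tr\,R^{\circ}_{g}(Ric_{g})=|Ric_{g}|^{2}$, since contracting $R(X,-,X,-)$ over $X$ produces $Ric_{g}$. These give
\[
tr\,\nabla(\mathcal{R}ic|_{\mathcal{M}_{1}})(g)= -\tfrac{n}{2}\Delta s_{g}+\tfrac{n-4}{2}\bigl(|Ric_{g}|^{2}-\|Ric_{g}\|^{2}\bigr).
\]
The only real care needed in Steps 1 and 2 is sign bookkeeping, namely the curvature convention in the definition of $R$ and the sign of $D^{*}D$. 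I would fix these by checking the case of a round sphere, where every term except the constant ones must vanish.

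\textbf{Step 3: the case $n=4$.} Here all the quadratic terms in Steps 1 and 2 drop out, and the hypothesis becomes $-(2+6t)\Delta s_{g}=0$. Since $t\neq-\frac13$, we get $\Delta s_{g}=0$. Because $M$ is closed, the maximum principle (or integrating $s_{g}\Delta s_{g}$) shows that $s_{g}$ is constant.

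\textbf{Step 4: constant scalar curvature implies rigidity.} This is the genuinely geometric step and the main point to secure. I would invoke the known result, essentially Proposition 3.1 of \cite{CPSR} already used for the previous theorem, that a closed quasi-Einstein manifold with constant scalar curvature is trivial.

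If a self-contained argument is wanted, I would argue as follows. For $m=\infty$, tracing the soliton equation gives $s_{g}+\Delta f=n\lambda$. Combining this with the soliton identity $\Delta s_{g}-\langle\nabla f,\nabla s_{g}\rangle=2\lambda s_{g}-2|Ric_{g}|^{2}$ and constancy of $s_{g}$ gives $|Ric_{g}|^{2}=\lambda s_{g}$. Integrating $\Delta f=n\lambda-s_{g}$ over $M$ gives $s_{g}=n\lambda$. Hence $|Ric_{g}-\lambda g|^{2}=|Ric_{g}|^{2}-2\lambda s_{g}+n\lambda^{2}=0$, so $g$ is Einstein.

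For finite $m$, the analogous route uses the Kim–Kim/Case–Shu–Wei identity for quasi-Einstein metrics, written in terms of $u=e^{-f/m}$. Constancy of $s_{g}$ there forces $\nabla f=0$ through a maximum-principle argument. This finite-$m$ case is the step I expect to carry the most weight, so I would rely on the cited proposition rather than redo it.
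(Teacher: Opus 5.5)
Your proposal is correct and follows the paper's proof. In dimension $4$ the trace of the gradient reduces to a nonzero multiple of $(1+3t)\Delta s_g$, so $s_g$ is harmonic and hence constant. Constant scalar curvature then forces rigidity; the paper compresses this step into ``by the maximal principle'', and you make it explicit.

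The finite-$m$ case of that last step is easier than you expect. With $u=e^{-f/m}$, the traced quasi-Einstein equation reads $\Delta u=\frac{s_g-n\lambda}{m}\,u$. Integrating over $M$ with $s_g$ constant and $u>0$ gives $s_g=n\lambda$, so $u$ is harmonic and hence constant. The same integration of $\Delta f=n\lambda-s_g$ settles $m=\infty$ without the $|Ric_g|^2$ identity.
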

\begin{proof}
  Using Equation \eqref{E1}, we have,
    \begin{align}
        \nabla (\mathcal{F}_{t}|_{\mathcal{M}_{1}})(g)=&  D^{*}DRic_{g} + Dds_{g} -  \frac{1}{2} (\Delta s_{g})g + \frac{1}{2}|Ric_{g}|^{2} -2R_{g}^{\circ}(Ric_{g}) \nonumber\\& + t (2Dds_{g} - 2 (\Delta s_{g})g - 2s_{g}.Ric_{g} + \frac{1}{2}s_{g}^{2}g).
    \end{align}
We have,
    \begin{align}
        tr(\nabla (\mathcal{F}_{t}|_{\mathcal{M}_{1}})(g)) =& 0\nonumber\\
     \text{i.e. }   (1 + 3t)\Delta s_{g} =& 0\nonumber
    \end{align}
     If $t \neq - \frac{1}{3}$, then $\Delta s_{g} = 0$. Hence $(M,g,f)$ is rigid by the maximal principle. 
\end{proof}
For a closed Ricci soliton of dimensions higher than $4$, we have the following general result.
\begin{theorem}\label{CRSTT}
A closed Ricci soliton $(M^{n},g,f)$ with $n\geq 5$, satisfying $tr(\nabla (\mathcal{F}_{t}|_{\mathcal{M}_{1}})(g)) = 0$, is rigid, for $t \in (-\infty, -\frac{3n -4}{8(n-1)}] \cup [0,\infty)$.
\end{theorem}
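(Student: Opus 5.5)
The plan is to turn the pointwise condition $tr(\nabla(\mathcal{F}_{t}|_{\mathcal{M}_{1}})(g))=0$ into a second-order elliptic equation for the scalar curvature alone, and then apply the maximum principle at the extremal points of $s_g$. I will use the sign conventions of the proof of \cref{DFRT}, in which the trace reduced to $(1+3t)\Delta s_g$ when $n=4$.

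\textbf{Step 1: the trace in dimension $n$.} Tracing \eqref{E1} in dimension $n$, and using $tr R^{\circ}_g(Ric_g)=|Ric_g|^2$, the $\mathcal{F}_t$ trace condition should take the form
\begin{equation*}
a_{n,t}\,\Delta s_g+\frac{n-4}{2}\Big[\big(|Ric_g|^2-\|Ric_g\|^2\big)+t\big(s_g^2-\|s_g\|^2\big)\Big]=0,
\end{equation*}
where $a_{n,t}$ is a multiple of $\frac{n}{2}+2(n-1)t$ (for $n=4$ this is a multiple of $1+3t$, consistent with the proof of \cref{DFRT}). Here the norms are understood with $vol=1$, so each is a constant.

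\textbf{Step 2: eliminate $|Ric_g|^2$.} By \cref{rs} and Perelman the soliton is gradient, $Ric_g+Hess f=\lambda g$. By Ivey we may assume $\lambda>0$, and then $s_g>0$ on a closed shrinker. I will use the standard soliton identity
\begin{equation*}
\Delta s_g-\langle\nabla f,\nabla s_g\rangle=2\lambda s_g-2|Ric_g|^2
\end{equation*}
to substitute $|Ric_g|^2=\lambda s_g-\tfrac12\Delta_f s_g$. The trace condition then becomes
\begin{equation*}
c\,\Delta s_g+\frac{n-4}{4}\langle\nabla f,\nabla s_g\rangle+\frac{n-4}{2}\,\varphi(s_g)=K,\qquad \varphi(s)=\lambda s+ts^2,
\end{equation*}
with $K$ a constant and $c$ a positive multiple of $-\big(\tfrac{3n-4}{4}+2(n-1)t\big)$. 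The coefficient $c$ vanishes exactly at $t=-\frac{3n-4}{8(n-1)}$, which explains the threshold. Getting the sign bookkeeping right here is the main technical obstacle.

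\textbf{Step 3: maximum principle.} Let $M=\max s_g$ and $m=\min s_g$, attained at $p_M$ and $p_m$, where $\nabla s_g=0$, $\Delta s_g(p_M)\le0$ and $\Delta s_g(p_m)\ge0$.

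\emph{Case $t\ge0$.} Here $c<0$, so comparing the equation at $p_M$ and $p_m$ gives $\varphi(M)\le\varphi(m)$. Since $\varphi$ is strictly increasing on $s>0$, this forces $M=m$.

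\emph{Case $t<-\frac{3n-4}{8(n-1)}$.} Here $c>0$ and the inequalities reverse: $\varphi(M)\ge\varphi(m)$. If $M\ne m$, dividing by $M-m$ gives $\lambda+t(M+m)\ge0$, that is, $M+m\le\lambda/|t|$. But $\int s_g=n\lambda\,vol$ (since $\int\Delta f=0$), so $M\ge n\lambda$, and $m>0$, hence $M+m>n\lambda$. For $n\ge5$ we have $|t|\ge\frac{3n-4}{8(n-1)}>\frac1n$, so $\lambda/|t|<n\lambda$, a contradiction.

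\emph{Case $t=-\frac{3n-4}{8(n-1)}$.} Here $c=0$ and the equation is first order. At $p_M$ and $p_m$ it gives $\varphi(M)=\varphi(m)$, and the same argument yields a contradiction unless $M=m$.

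\textbf{Step 4: rigidity.} In all cases $s_g$ is constant, and the average computation gives $s_g=n\lambda$. Then $\Delta f=n\lambda-s_g=0$, so $f$ is constant and $g$ is Einstein.
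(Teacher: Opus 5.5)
Your argument is correct. The sign you flagged as the main risk comes out as you claim: tracing \eqref{E1} gives exactly $-(\frac{n}{2}+2(n-1)t)\Delta s_g+\frac{n-4}{2}\big[(|Ric_g|^2-\|Ric_g\|^2)+t(s_g^2-\|s_g\|^2)\big]=0$. Inserting $|Ric_g|^2=\lambda s_g-\frac12(\Delta s_g-\langle\nabla f,\nabla s_g\rangle)$ then gives precisely $c=-(\frac{3n-4}{4}+2(n-1)t)$.

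Your route is genuinely different from the paper's. The paper splits into ranges of $t$:
\begin{itemize}
\item For $t\ge0$ it uses the maximum principle only at the minimum point $p$ of $s_g$. The trace identity gives $|Ric_g|^2(p)\ge\|Ric_g\|^2=n\lambda^2+\|Hess f\|^2$, the soliton identity gives $\lambda s_{min}\ge|Ric_g|^2(p)$, and then $s_{min}\le n\lambda$ forces $Hess f=0$.
\item For negative $t$ it multiplies both identities by $s_g$ and integrates by parts to eliminate $\|Ric_g\|^2$ and $\int_M s_g|Ric_g|^2dv_g$. It concludes from $\int_M s_g^3dv_g\ge(n\lambda)^3$ and $\|s_g\|^2\ge(n\lambda)^2$, with an extra split at $t=-\frac{3n-4}{4n}$. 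The threshold appears there only as the sign of the coefficient $\frac{3n-4}{2}+4(n-1)t$ of $\|\nabla s_g\|^2$.
\end{itemize}

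Your pointwise elimination instead produces one drift-elliptic equation for $s_g$, and a single max/min comparison covers every range. What this buys:
\begin{itemize}
\item The threshold is explained as the degeneration of the principal coefficient.
\item The only extra input past the threshold is $|t|>\frac1n$, which is automatic for $n\ge5$.
\item No Jensen-type bounds or auxiliary case split are needed.
\item Nothing depends on the volume normalization, since $K$ only needs to be constant.
\end{itemize}
It also gives a $g$-dependent extension in the spirit of the paper's remark on improved bounds. If $s_g$ is nonconstant and $-\frac{\lambda}{s_{max}+s_{min}}<t<0$, then still $c<0$ and $\lambda+t(s_{max}+s_{min})>0$, which contradicts $\varphi(s_{max})\le\varphi(s_{min})$.

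As in the paper, positivity of $s_g$ on a closed shrinker is an input you should state explicitly. It follows from the soliton identity at the minimum of $s_g$, using $|Ric_g|^2\ge\frac{1}{n}s_g^2$, together with the strong maximum principle.
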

\begin{proof}
In this case, we have (using Equation \eqref{E1})
\begin{align}\label{FTGE}
        \nabla (\mathcal{F}_{t}|_{\mathcal{M}_{1}})(g) =&  D^{*}DRic_{g} + Dds_{g} -  \frac{1}{2} (\Delta s_{g})g + \frac{1}{2}|Ric_{g}|^{2}g -2R_{g}^{\circ}(Ric_{g}) +(\frac{2}{n} - \frac{1}{2})\|Ric_{g}\|^{2}g\nonumber\\& + t (2Dds_{g} - 2 (\Delta s_{g})g - 2s_{g}.Ric_{g} + \frac{1}{2}s_{g}^{2}g +(\frac{2}{n} - \frac{1}{2})\|s_{g}\|^{2}g).\nonumber
    \end{align}
We have,
\begin{equation*}
    tr(\nabla \mathcal{F}_{t}(g)|_{\mathcal{M}_{1}}) = 0.
\end{equation*}
\begin{align}\label{TGFTE}
-(\frac{n}{2} + 2(n-1)t)\Delta s_{g} + (\frac{n}{2} -2)(|Ric_{g}|^{2}-\|Ric_{g}\|^{2}) + t((\frac{n}{2}-2)(s_{g}^{2} - \|s_{g}\|^{2}))=0.
\end{align}
We divide proof in three cases: $t \geq 0$, $t \leq -\frac{3n-4}{4n}$ and $-\frac{3n-4}{4n} \leq t \leq - \frac{3n-4}{8(n-1)}$.\\

\noindent Case I: $t \geq 0$.\\

\noindent Let $p$ be the point where $s_{g}$ attains a global minimum. At $p$, $\Delta s_{g}(p) \geq 0$ and $s_{g}^{2}(p) \leq \|s_{g}\|^{2}$. Hence by \cref{TGFTE},
\begin{equation}
    |Ric_{g}|^{2}(p) \geq \|Ric_{g}\|^{2}
\end{equation}
\begin{align}\label{TNRE}
    |Ric_{g}|^{2}(p) \geq\,\, & \|Ric_{g}\|^{2}= \|\lambda g - Hess f\|^{2} = n \lambda^{2} + \|Hessf\|^{2}.\nonumber\\
    |Ric_{g}|^{2}(p) \geq\,\, & n \lambda^{2} + \|Hessf\|^{2} . 
\end{align}
From Lemma 2.5 of \cite{RGRS},
\begin{align}\label{PICE}
    \frac{1}{2}(\Delta s_{g} - \nabla_{\nabla f}s_{g}) =& -|Ric_{g} - \frac{s_{g}}{n}g|^{2} +s_{g}(\lambda - \frac{1}{n}s_{g}).\nonumber\\
    \frac{1}{2}(\Delta s_{g} - \nabla_{\nabla f}s_{g}) =& \lambda s_{g} - |Ric_{g}|^{2}.
\end{align}
At $p$, $\nabla s_{g}$ vanishes and $\Delta s_{g}(p) \geq 0$. Hence,
\begin{equation}\label{MSCE}
    \lambda s_{min} \geq |Ric_{g}|^{2}(p).
\end{equation}
Using \cref{TNRE} in \cref{MSCE},
\begin{equation}\label{MSE}
    \lambda s_{min} \geq n \lambda^{2} + \|Hessf\|^{2}.
\end{equation}
Since $s_{g} = n\lambda - \Delta f$, $s_{min} \leq n\lambda$. By \cref{MSE}, we will get $Hess(f) = 0$. Hence $(M,g,f)$ is rigid.\\

\noindent Case II: $t \leq - \frac{3n -4}{4n}$.\\

\noindent Multiplying \cref{TGFTE} by $s_{g}$ and integrating and using the divergence theorem gives,
\begin{align}\label{SCIE}
    (n + 4(n-1)t)\|\nabla s_{g}\|^{2} + (n-4)\int_{M}(s_{g}|Ric_{g}|^{2} + ts_{g}^{3})dv_{g} = n(n-4)\lambda (\|Ric_{g}\|^{2} + t\|s_{g}\|^{2})
\end{align}
Multiplying \cref{PICE} by $s_{g}$ and integrating and using the divergence theorem,
\begin{equation}\label{GSCE}
    \|\nabla s_{g}\|^{2} = \frac{(n-4)\lambda}{2}\|s_{g}\|^{2} - \frac{1}{2}\int_{M}s_{g}^{3}dv_{g} + 2\int_{M}s_{g}|Ric_{g}|^{2}dv_{g}
\end{equation}
Integrating \cref{PICE} and using the divergence theorem,
\begin{equation}\label{NRCE}
    \|Ric_{g}\|^{2} = \frac{1}{2}\|s_{g}\|^{2} - \frac{n(n-2)\lambda^{2}}{2}
\end{equation}
Substituting \cref{NRCE} and \cref{GSCE} in \cref{SCIE}, we get
\begin{equation}
    2(n-2)t\|\nabla s_{g}\|^{2} + (3n -4 + 4nt)\int_{M}s_{g}|Ric_{g} - \frac{s_{g}}{n}g|^{2}dv_{g} + \frac{(n-2)(n-4)}{2}(n^{2}\lambda^{3} - \frac{1}{n}\int_{M}s_{g}^{3}dv_{g}) = 0
\end{equation}
Since $\int_{M}s_{g}^{3} \geq (n\lambda)^{3}$ and $t \leq - \frac{3n -4}{4n}$, $(M,g,f)$ is rigid.\\

\noindent Case III : $-\frac{3n-4}{4n}\leq t \leq - \frac{3n-4}{8(n-1)}$.\\

\noindent We have, 
\begin{equation}\label{GSRE}
    2(n-2)t\|\nabla s_{g}\|^{2} + (3n -4 + 4nt)\int_{M}s_{g}|Ric_{g}|^{2}dv_{g} -(\frac{n}{2}+4t)\int_{M} \frac{s_{g}^{3}}{n}dv_{g} + \frac{(n-2)(n-4)}{2}n^{2}\lambda^{3} = 0
\end{equation}
Substituting \cref{GSCE} in \cref{GSRE}, 
\begin{equation}
    (\frac{3n-4}{2}+4(n-1)t)\|\nabla s_{g}\|^{2} + (n-4)(\frac{n^{2}(n-2)\lambda^{3}}{2} + (\frac{1}{4}+t)\int_{M}s_{g}^{3}dv_{g}- (\frac{3n-4}{4n} + nt)\lambda \|s_{g}\|^{2}) = 0
\end{equation}
Since $\int_{M}s_{g}^{3}dv_{g} \geq (n\lambda)^{3}$, $\|s_{g}\|^{2} \geq (n\lambda)^{2}$ and $-\frac{3n-4}{4}\leq t \leq - \frac{3n-4}{8(n-1)}$, $(M,g,f)$ is rigid.
\end{proof}
 The critical points of $\mathcal{F}_{-\frac{1}{3}}|_{\mathcal{M}_{1}}$ are Bach-flat by \cite{GVCM}. Since Bach flat Ricci solitons are rigid by \cite{BFSS}, by \cref{DFRT} and \cref{CRSTT}, we conclude \cref{MT}.\\
Since $-\frac{3}{8} < - \frac{3}{8} + \frac{1}{8(n-1)} = - \frac{3n-4}{8(n-1)}$, we get the following corollary.
\begin{corollary}\label[corollary]{DIRC}
    A Ricci soliton $(M^{n},g,f)$ that is a critical point of $\mathcal{F}_{t}|_{\mathcal{M}_{1}}$, is rigid where $t \in (-\infty, -\frac{3}{8}] \cup [0,\infty)$.
\end{corollary}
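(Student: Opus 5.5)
The plan is to reduce Corollary \ref{DIRC} to Theorem \ref{MT}(1) by comparing the two ranges of $t$. Fix a closed Ricci soliton $(M^n,g,f)$ that is a critical point of $\mathcal{F}_t|_{\mathcal{M}_1}$ with $t \in (-\infty,-\frac{3}{8}]\cup[0,\infty)$. By Remark \ref{rs} it is a gradient soliton. As noted at the start of this section, closed Ricci solitons of dimension at most $3$ are already rigid, so we may assume $n \geq 4$.

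\textbf{Case $n=4$.} Theorem \ref{MT}(1) gives rigidity for every $t$. The underlying argument runs as follows. If $t \neq -\frac13$, criticality forces the pointwise trace of the gradient to vanish, and Theorem \ref{DFRT} applies. If $t=-\frac13$, the metric is Bach-flat by \cite{GVCM}, and Bach-flat solitons are rigid by \cite{BFSS}.

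\textbf{Case $n\geq 5$.} It suffices to check the inclusion
\[
(-\infty,-\tfrac38]\cup[0,\infty)\subseteq(-\infty,-\tfrac{3n-4}{8(n-1)}]\cup[0,\infty).
\]
This follows from the identity
\[
-\frac{3n-4}{8(n-1)} \;=\; -\frac{3(n-1)-1}{8(n-1)} \;=\; -\frac38+\frac{1}{8(n-1)} \;>\; -\frac38 .
\]
Hence any $t\le -\frac38$ also satisfies $t\le -\frac{3n-4}{8(n-1)}$.

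Since $g$ is critical, $\nabla(\mathcal{F}_t|_{\mathcal{M}_1})(g)$ vanishes along $T_g\mathcal{M}_1$, and in particular its pointwise trace vanishes. Theorem \ref{CRSTT}, equivalently Theorem \ref{MT}(1), then yields rigidity.

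There is no real obstacle here. The only points that need care are the elementary inequality above and the justification that criticality on $\mathcal{M}_1$ implies the trace condition. For the latter, note that the gradient formula \eqref{E1} already includes the volume-normalization terms. Pairing it with $h=\varphi g - \left(\int_M\varphi\, dv_g\right) g$ (which lies in $T_g\mathcal{M}_1$ for unit volume) shows that $\operatorname{tr}\nabla(\mathcal{F}_t|_{\mathcal{M}_1})(g)$ is constant. That constant integrates to zero by the built-in normalization terms, so the trace vanishes identically.
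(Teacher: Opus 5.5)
Your proposal is correct and follows the paper's own argument: the corollary is reduced to Theorem \ref{MT} via the inequality $-\frac38 < -\frac38+\frac{1}{8(n-1)} = -\frac{3n-4}{8(n-1)}$. Your extra bookkeeping is sound (the low-dimensional cases, and the fact that criticality forces the pointwise trace of the restricted gradient to vanish), and the $t=-\frac13$ Bach-flat branch in your $n=4$ case is never actually needed, since $-\frac13\notin(-\infty,-\frac38]\cup[0,\infty)$.
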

\bibliographystyle{amsplain}
\bibliography{references}
\end{document}